\begin{document}

\newtheorem{thm}{Theorem}
\newtheorem{prop}[thm]{Proposition}
\newtheorem{lem}[thm]{Lemma}
\newtheorem{cor}[thm]{Corollary}

\theoremstyle{definition}
\newtheorem{defn}[thm]{Definition}
\newtheorem{remark}[thm]{Remark}
\newtheorem{remarks}[thm]{Remarks}
\newtheorem{example}[thm]{Example}
\newtheorem{question}[thm]{Question}
\newtheorem{reduction}[thm]{Reduction}
\newtheorem{conjecture}[thm]{Conjecture}
\newtheorem*{ack}{Acknowledgment}

\renewcommand{\labelenumi}{(\alph{enumi})}

\newcommand{\onto}{\twoheadrightarrow}
\newcommand{\mono}{\rightarrowtail}
\newcommand{\into}{\hookrightarrow}
\newcommand{\lra}{\longrightarrow}
\newcommand{\isomoto}{\overset{\sim}{\tto}}
\newcommand{\Mat}{{\operatorname{M}}}
\newcommand{\Mn}{\Mat_n}
\newcommand{\PGLn}{{\operatorname{PGL}}_n}
\newcommand{\PGL}{{\operatorname{PGL}}}
\newcommand{\Sym}{\operatorname{S}}

\newcommand{\cG}{\mathcal{G}}
\newcommand{\cH}{\mathcal{H}}
\newcommand{\cN}{\mathcal{N}}
\newcommand{\cR}{\mathcal{R}}
\newcommand{\cF}{\mathcal{F}}
\newcommand{\cD}{\mathcal{D}}
\newcommand{\bt}{\mathbf{t}}

\newcommand{\Char}{\operatorname{char}}

\newcommand{\bbZ}{\mathbb{Z}}
\newcommand{\bbG}{\mathbb{G}}
\newcommand{\bbC}{\mathbb{C}}
\newcommand{\bbQ}{\mathbb{Q}}
\newcommand{\bbF}{\mathbb{F}}
\newcommand{\bbP}{\mathbb{P}}
\newcommand{\bbA}{\mathbb{A}}
\newcommand{\bbN}{\mathbb{N}}
\newcommand{\bbH}{\mathbb{H}}
\newcommand{\bbR}{\mathbb{R}}
\newcommand{\Rad}{\operatorname{Rad}}

\title[A non-commutative Nullstellensatz]{A non-commutative Nullstellensatz}

\author{Zhengheng Bao}
\address{Department of Mathematics, University of British Columbia,
	Vancouver, BC, Canada, V6T 1Z2}
\email{zhengheng.bao@alumni.ubc.ca}
\thanks{Zhengheng Bao was partially supported by a Work Learn International Undergraduate Research Award (WLIURA) 
at the University of British Columbia} 

\author{Zinovy Reichstein}
 \email{reichst@math.ubc.ca}

\thanks{Zinovy Reichstein was partially supported by
 National Sciences and Engineering Research Council of
 Canada Discovery grant 253424-2017.}
\subjclass{16S36, 14A25}

\keywords{The Nullstellensatz, real Nullstellensatz, central division algebra.}

\begin{abstract} Let $K$ be a field and $D$ be a finite-dimensional central division algebra 
over $K$. We prove a variant of the Nullstellensatz for $2$-sided ideals in the ring of
polynomial maps $D^n \to D$. In the case where $D = K$ is commutative, 
our main result reduces to the $K$-Nullstellensatz of Laksov and Adkins-Gianni-Tognoli. In the case, where
$K = \mathbb R$ is the field of real numbers and $D$ is the algebra of Hamilton quaternions, it reduces to 
the quaternionic Nullstellensatz recently proved by Alon and Paran.
\end{abstract}

\maketitle

\section{Introduction}

Let $K$ be a field and $P_{K, n} = K[x_1, \ldots, x_n]$ be the commutative polynomial ring in $n$ variables over $K$.
For any ideal $J \subset P_{K, n}$ we will denote the set of $K$-points cut out by $J$ in the affine space
$\mathbb A^n$ by 
\[ \mathcal{Z}(J) = \{ (a_1, \ldots, a_n) \in K^n \, | \, f(a_1, \ldots, a_n) = 0 \; \, \forall \; f \in J \} . \]
Conversely, for any set $X$ of $K$-points in $\bbA^n$, we will denote the ideal of polynomials vanishing at every point of $X$
by $\mathcal{I}(X) \subset P_{K, n}$. If $K$ is an algebraically closed field, then the celebrated Hilbert Nullstellensatz asserts that
\begin{equation} \label{null-alg-closed} \mathcal{I}(\mathcal{Z}(J)) = \Rad(J), \end{equation}
where $\Rad(J) = \{ f \in P_{K, n} \, | \, f^m \in J \; \, \text{for some integer $m \geqslant 1$} \}$ is the radical of $J$.

If $K$ is not algebraically closed, then $\mathcal{I}(\mathcal{Z}(J))$ is larger than $\Rad(J)$ in general. However, several variants of the Nullstellensatz are known to hold in this setting. In particular, if $K = \bbR$ is the field of real numbers, then for every ideal $J \subset \mathbb R[x_1, \ldots, x_n]$,
we have
\begin{equation} \label{e.real-null} \mathcal{I}(\mathcal{Z}(J)) = \Rad_{\mathbb R}(J) , \end{equation}
where 
\[
\begin{split}
\Rad_{\mathbb R}(J) = \{ f \in P_{K, n} \; | \; & \exists \; m,r \geqslant 1 \; \text{and} \;  
f_1, \ldots, f_{m-1} \in P_{K, n} \\   
                        & \text{such that $f_1^2 + \ldots + f_{m-1}^2 + f^{2r} \in J$} \}.  
\end{split}
\]
This assertion is known as the ``real Nullstellensatz"; see~\cite[Corollary 4.2.5]{prestel}. The ideal $\Rad_{\mathbb R}(J)$ 
is called the real radical of $J$. For the history of 
the real Nullstellensatz and further references, we refer the reader to~\cite[Section 4.7]{prestel}. 

There is also a version of the Nullstellensatz for a general field $K$. To state it, recall that a homogeneous polynomial
$p \in K[z_1, \ldots, z_m]$ is called anisotropic if $p(b_1, \ldots, b_m) = 0$ for some $b_1, \ldots, b_m \in K$ is
only possible when $b_1 = \ldots = b_m = 0$. Similarly we will say that a homogeneous polynomial $p \in K[z_1, \ldots, z_m]$  
is quasi-anisotropic if $p(b_1, \ldots, b_m) = 0$ for some $b_1, \ldots, b_m \in K$ is only possible when $b_m = 0$. Then
\begin{equation} \label{e.K-null}
\mathcal{I}(\mathcal{Z}(J)) = \Rad_K(J), 
\end{equation}
where
\begin{align*}
\Rad_K(J)  = & \; \{ f \in P_{K, n} \; | \; \exists \; \text{$m \geqslant 1$, a homogeneous quasi-anisotropic 
                $p \in K[z_1, \ldots, z_m]$}  \\
                        &  \; \text{and $f_1, \ldots, f_{m-1} \in P_{K, n}$  such that $p(f_1, \ldots, f_{m-1}, f) \in J$} \}.   
\end{align*}
This identity, known as the $K$-Nullstellensatz, is due to Laksov~\cite[p.~324]{laksov}. In weaker form it
appeared earlier in the work of Adkins-Gianni-Tognoli~\cite{agt}; see Section~\ref{sect.D-radical}. 
The ideal $\Rad_K(J) \subset K[x_1, \ldots, x_n]$ 
is called the $K$-radical of $J$. Laksov~\cite{laksov} showed that \eqref{e.K-null} can fail if ``quasi-anisotropic" 
is replaced by ``anisotropic" in the definition of $\Rad_K(J)$.

The purpose of this note is to explore a non-commutative version of the Nullstellensatz, following up on 
a recent paper by Alon and Paran~\cite{alon-paran}. In the sequel $K$ will denote an infinite field and $D$ 
a finite-dimensional central division algebra of degree $s$ over $K$. The role of the polynomial ring $P_{K, n}$ 
will be played by the ring $P_{D, n}$ of polynomial functions $f \colon D^n \to D$.
Here we refer to a function $f(x_1, \ldots, x_n) \colon D^n \to D$ as a polynomial function if 

\smallskip
(1) $f$ is expressible as a sum of finitely many ``monomial" functions of the form
\[ (x_1, \ldots, x_n) \mapsto a_1 x_{i_1} a_2 x_{i_2} \cdot \ldots \cdot a_r x_{i_r}  a_{r+1} , \]
for some $a_1, \ldots, a_{r+1} \in D$.

\smallskip
Alternatively, if $b_1, \ldots, b_{s^2}$ is a $K$-basis for $D$, then we can identify $D$ with $K^{s^2}$
and $D^n$ with $K^{ns^2}$. Writing $x_i = y_{i1} b_1 + \ldots + y_{is^2} b_{s^2}$ for each $i = 1, \ldots, n$ and
expanding $f(x_1, \ldots, x_n)$, we obtain
\[ f(x_1, \ldots, x_n) = f_1(y_{ij}) b_1 + \ldots + f_{s^2}(y_{ij}) b_{s^2},  \]
where each $f_i$ is a function $K^{ns^2} \to K$.
We can now define $f(x_1, \ldots, x_n) \colon D^n \to D$ to be a polynomial function if 

\smallskip
(2) each $f_k(y_{ij})$ is a polynomial in the variables $y_{ij}$ ($i = 1, \ldots, n$ and $j = 1, \ldots, s^2$)
with coefficients in $K$.

\smallskip
\noindent
It is easy to see that (2) does not depend on the choice of $K$-basis $b_1, \ldots, b_{s^2}$ in $D$. 
Moreover, (1) and (2) are equivalent; see Wilczy\'{n}ski~\cite[Theorem 4.1]{wil} or Alon-Paran~\cite[Theorem 5]{alon-paran}.

As in the commutative case, for any two-sided ideal $J \subset P_{D,n}$, we can define the zero locus of $J$ in $D^n$ as 
\[ \mathcal{Z}(J) = \{ (d_1, \ldots, d_n) \subset D^n \, | \, f(d_1, \ldots, d_n) = 0 \; \forall f \in J \}. \] 
Similarly for any subset $X \subset D^n$, we can define the two-sided ideal 
\[ \mathcal{I}(X) = \{ f \in P_{D,n} \; | \; f(d_1, \ldots, d_n) = 0 \; \forall \; (d_1, \ldots, d_n) \in X  \} \]
of $P_{D,n}$.

In the case, where $K = \mathbb R$ is the field of real numbers, and $D = \mathbb H$, the algebra of Hamilton quaternions, Alon and Paran
proved the following ``quaternionic" version of the Nullstellensatz:
\begin{equation} \label{e.H-null}
\begin{split}
\mathcal{I}(\mathcal{Z}(J)) = &  \; \{ f \in P_{\mathbb H, n} \; | \;  \exists \; m , r\geqslant 1 \; \text{and} \;   f_1, \ldots, f_{m-1} \in P_{\mathbb H, n} \\
 & \; \text{such that
   $N_{\mathbb H}(f_1) +  \ldots + N_{\mathbb H}(f_{m-1}) + N_{\mathbb H}(f)^r  \in J \}$;}
\end{split} \end{equation}
see~\cite[Theorem 1]{alon-paran}.
Here $N_{\mathbb H}(f)$ denotes the composition of $f \colon \mathbb{H}^n \to \mathbb{H}$ with the norm function 
$N_{\mathbb H} \colon \mathbb H \to \mathbb R \hookrightarrow \mathbb H$. Their proof is based on the real Nullstellensatz~\eqref{e.real-null}.

Our goal in this note is to extend~\eqref{e.H-null} to two-sided ideals 
$J$ in $P_{D, n}$, for an arbitrary infinite field $K$ and arbitrary central division $K$-algebra $D$. 
To state our main result, we need to introduce some additional notations.

We will denote the center of $P_{D, n}$ by $C_{D, n}$. It is easy to see that $C_{D, n}$ is the ring of polynomial maps $D^n \to K$, 
where we identify $K$ with the center of $D$. In other words, $C_{D, n}$ is the commutative polynomial 
ring $K[y_{ij}]$ in $ns^2$ variables, where $i = 1, \ldots, n$ and $j = 1, \ldots, s^2$. Note that in the case, where $D = K$ is a field 
 (i.e., $s = 1$), $P_{D, n} = C_{D, n} = P_{K, n} = K[x_1, \ldots, x_n]$. 

There are two natural candidates for the radical of a $2$-sided ideal $J \subset P_{D, n}$ suggested by~\eqref{e.H-null}, $\Rad_D(J)$ and $\Rad_D'(J)$ defined below. Here $J_c = J \cap C_{D, n}$.
\begin{align*} 
\Rad_D(J) = & \; \{ f \in P_{D, n} \, | \, N_D(f) \in \Rad_K(J_c) \}  \\
           = &  \; \{ f \in P_{D, n} \; | \; \exists \; \text{$m \geqslant 1$, a homogeneous quasi-anisotropic 
                $p \in K[z_1, \ldots, z_m]$}  \\
                        &  \; \text{and $f_1, \ldots, f_{m-1} \in C_{D, n}$  such that $p(f_1, \ldots, f_{m-1}, N_D(f)) \in J \}$.} 
\end{align*}
and
\begin{align*}  \Rad'_D(J) =  & \;   \{ f \in P_{D, n} \; | \; \exists \; \text{$m \geqslant 1$, a homogeneous quasi-anisotropic 
                $p \in K[z_1, \ldots, z_m]$}  \\
                        & \;  \text{and $f_1, \ldots, f_{m-1} \in P_{D, n}$  such that $p(N_D(f_1), \ldots, N_D(f_{m-1}), N_D(f)) \in J \}$.} 
\end{align*}

Once again, by the norm $N_{D}(f)$ of $f \in P_{D, n}$ we mean the composition of $f \colon D^n \to D$ 
with the norm map $N_{D} \colon D \to K$, where we identify $K$ with the center of $D$.
We are now ready to state the main result of this paper.

\begin{thm} \label{thm.main}  Let $K$ be an infinite field,
$D$ be a finite-dimensional central division algebra of degree $s$ over $K$, 
and $J$ be a $2$-sided ideal in $P_{D, n}$. Then

\smallskip
(a) $\mathcal{I}(\mathcal{Z}(J)) = \Rad_D(J)$.

\smallskip
(b) Moreover, if $\operatorname{char}(K)$ does not divide $s!$, then $\Rad_D(J) = \Rad'_D(J)$.
\end{thm}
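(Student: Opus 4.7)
For part (a), my plan is to reduce to the commutative $K$-Nullstellensatz~\eqref{e.K-null} applied to the contracted ideal $J_c := J \cap C_{D, n}$ in the commutative polynomial ring $C_{D, n} = K[y_{ij}]$, after identifying $D^n$ with $K^{ns^2}$. The preliminary step is to show that $N_D(f) \in J_c$ for every $f \in J$, using the Cayley--Hamilton theorem for the central simple algebra $D$: each $f \in P_{D,n}$ satisfies its reduced characteristic polynomial $f^s - c_1(f) f^{s-1} + \cdots + (-1)^s N_D(f) = 0$ with $c_i(f) \in C_{D,n}$, and rearranging gives $N_D(f) \in f \cdot P_{D,n}$, so $N_D(f) \in J \cap C_{D,n} = J_c$ because $J$ is a two-sided ideal. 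Combined with the division-algebra equivalence $N_D(a) = 0 \iff a = 0$ for $a \in D$, this yields $\mathcal{Z}(J) = \mathcal{Z}(J_c)$ as subsets of $K^{ns^2}$, so \eqref{e.K-null} applied to $J_c$ gives $\mathcal{I}(\mathcal{Z}(J_c)) = \Rad_K(J_c)$, and finally $f \in \mathcal{I}(\mathcal{Z}(J))$ iff $N_D(f) \in \Rad_K(J_c)$ iff $f \in \Rad_D(J)$.

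For part (b), the inclusion $\Rad_D'(J) \subseteq \Rad_D(J)$ requires no hypothesis on characteristic: setting $f_i := N_D(g_i) \in C_{D,n}$ in a certificate for $\Rad_D'$ gives a certificate for $\Rad_D$. For the converse, start from $p(f_1, \ldots, f_{m-1}, N_D(f)) \in J$ where $p$ is homogeneous quasi-anisotropic and $f_i \in C_{D,n}$. The plan is to rewrite each $f_i$ as a $K$-linear combination $f_i = \sum_j c_{ij} g_{ij}^s = \sum_j c_{ij} N_D(g_{ij})$ (using $N_D(g) = g^s$ for central $g$), then apply the corresponding linear substitution $z_i \mapsto \sum_j c_{ij} w_{ij}$ (for $i < m$, leaving $z_m$ untouched) to $p$. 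The result is a new polynomial $P(w_{ij}, z_m)$ of the same degree as $p$, still homogeneous because the substitution is linear, and still quasi-anisotropic because $P(a_{ij}, b) = 0$ in $K$ implies $p$ vanishes at a $K$-point with last coordinate $b$, forcing $b = 0$. Evaluating at $w_{ij} = N_D(g_{ij})$ and $z_m = N_D(f)$ recovers $p(f_1, \ldots, f_{m-1}, N_D(f)) \in J$, so $P$ together with the $g_{ij}$ witnesses $f \in \Rad_D'(J)$.

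The main obstacle in part (b), and the reason for the hypothesis $\operatorname{char}(K) \nmid s!$, is therefore the algebraic \emph{Fact} that in such characteristic, every polynomial in $K[y_1, \ldots, y_N]$ is a $K$-linear combination of $s$-th powers of polynomials. I would deduce this from the classical polarization identity
\[
s! \, g_1 g_2 \cdots g_s = \sum_{S \subseteq \{1, \ldots, s\}} (-1)^{s - |S|} \Bigl( \sum_{i \in S} g_i \Bigr)^{\!s},
\]
which expresses any product of $s$ polynomials as a $K$-linear combination of $s$-th powers whenever $s!$ is invertible in $K$. Any monomial $y^\alpha$ can be written as such a product of $s$ factors (padding by $1$'s when $|\alpha| \leqslant s$, or splitting into $s$ pieces when $|\alpha| > s$), so every polynomial lies in the $K$-span of $s$-th powers; this is precisely the step where invertibility of $s!$ is essential.
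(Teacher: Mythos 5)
Your argument is correct and follows essentially the same overall plan as the paper: reduce to the commutative $K$-Nullstellensatz via the contracted ideal $J_c = J \cap C_{D,n}$ for part~(a), and use the polarization identity to express polynomials as $K$-linear combinations of $s$-th powers for part~(b). The one genuine divergence is in how you establish $\mathcal{Z}(J) = \mathcal{Z}(J_c)$. The paper cites a structural result~\cite[Proposition~17.5]{gw} asserting that $J$ is generated by $J_c$ as an ideal. You instead give a self-contained argument via the reduced Cayley--Hamilton identity: for $f \in J$ one has $N_D(f) = \pm\,\bigl(f^{s-1} - c_1(f)f^{s-2} + \cdots\bigr)f \in J$ (the coefficients $c_i(f)$ being central, so they commute past $f$), hence $N_D(f) \in J_c$; combined with $N_D(d) = 0 \iff d = 0$ in a division algebra, this yields the zero-set equality directly. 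Your route proves less than the Goodearl--Warfield citation (you do not show $J$ is generated by $J_c$), but it proves exactly what is needed and avoids an external reference, which some readers may find preferable. Your streamlining of the ``$\subset$'' direction in~(a) into a single chain of equivalences, bypassing the componentwise decomposition $f = \sum f_i b_i$, is also a clean simplification. For part~(b), your application of the polarization formula (splitting a monomial into $s$ factors) differs cosmetically from the paper's choice $t_1 = f$, $t_2 = \cdots = t_s = 1$, but both are valid; the remaining substitution argument and the check that quasi-anisotropy is preserved under linear change of the first $m-1$ variables match the paper's.
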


In particular, Theorem~\ref{thm.main} tells us that $\Rad_D(J)$ is always a 2-sided ideal in $P_{D, n}$,
which is not obvious from the definition. Similarly Theorem~\ref{thm.main}(b) tells us that $\Rad'_D(J)$ 
is a $2$-sided ideal of $P_{D, n}$, provided that $\operatorname{char}(K)$ does not divide $s!$.

When $D = K$ (i.e., $s = 1$), then $P_{D,n} = C_{D,n} = K[x_1, \ldots, x_n]$ is the usual commutative polynomial ring,
$N_D(f) = f$, and Theorem~\ref{thm.main} reduces to the $K$-Nullstellensatz~\eqref{e.K-null}.
Note however that we will not give a new proof in this case. We will assume the $K$-Nullstellensatz 
and deduce Theorem~\ref{thm.main} from it.

The assumption that $K$ is infinite is made to ensure that formal polynomials are in bijective correspondence
with polynomial maps $D^n \to D$. This assumption is harmless: if $K$ is finite, then by Wedderburn's Little Theorem, 
the only finite-dimensional central division algebra over $K$ is $D = K$ itself. This places us back in the setting of the commutative 
$K$-Nullstellensatz~\eqref{e.K-null}. Thus we are not missing anything by assuming that $K$ is infinite.

The main weakness of the $K$-Nullstellensatz~\eqref{e.K-null} is that for most fields $K$ 
the set of quasi-anisotropic polynomials is difficult to describe explicitly. 
Theorem~\ref{thm.main} tells us that, as in the quaternionic case~\eqref{e.H-null}, 
no additional complications arise in passing from $K$ to $D$; see also Proposition~\ref{prop.D-radical}.

\section{Proof of Theorem~\ref{thm.main}(a)}

Let $J$ be a $2$-sided ideal of $P_{D, n}$. Set $J_c = J \cap C_{D, n}$, as we did in the Introduction.
By \cite[Proposition 17.5]{gw}, $J$ is generated by $J_c$ as an ideal. In particular,
\begin{equation} \label{e.z} Z_D(J) = Z_K(J_c).
\end{equation}
Here $Z_D(J)$ is a subset of $D^n$ and $Z_K(J_c)$ is a subset of $K^{ns^2}$, and we are using a $K$-basis
$b_1, \ldots, b_{s^2}$ to identify $D^n$ with $K^{ns^2}$.

Suppose $f \in \Rad_D(J)$, i.e., $N_{D}(f) \in \Rad_K(J_c)$.  Then there exists a positive integer $m$, a homogeneous
quasi-anisotropic polynomial $p \in K[z_1, \ldots, z_m]$ and $f_1, \dots, f_{m-1} \in J_c$ 
such that $p(f_1, \dots, f_{m-1}, N_D(f)) \in J_c$. Hence, for any $a \in \mathcal{Z}(J_c) = \mathcal{Z}(J)$, 
we have $p(f_1(a), \dots, f_{m-1}(a), N_D(f)(a)) =0$. Since $p$ is quasi-anisotropic, this implies $N_D(f)(a)=0$.
Since $D$ is a central division algebra over $K$, this is only possible if $f(a)=0$. We conclude that    
$f(a)=0$ for any $a \in \mathcal{Z}(J)$. So $f \in \mathcal{I}(\mathcal{Z}(J))$. This shows that    
$\Rad_D(J) \subset \mathcal{I}(\mathcal{Z}(J))$.
    
To prove the opposite inclusion, let $f \in \mathcal{I}(\mathcal{Z}(J))$.  Write $f = f_1 b_1 + \ldots + f_{s^2} b_{s^2}$, where each
$f_i$ lies in $C_{D, n}$. To say that $f$ vanishes at every point of $\mathcal{Z}(J)$ is equivalent 
to saying that each $f_i$ vanishes at 
every point of $\mathcal{Z}(J)$. In view of~\eqref{e.z}, the latter is equivalent to each $f_i$ vanishing 
at every point of $\mathcal{Z}(J_c)$. In other words, $f_i \in \mathcal{I}(\mathcal{Z}(J_c))$ for every $i = 1, \ldots, s^2$.
By the $K$-Nullstellensatz~\eqref{e.K-null}, $\mathcal{I}(\mathcal{Z}(J_c))=\Rad_K(J_c)$, so each $f_i$ lies in $\Rad_K(J_c)$.
Since $N_D(f)$ is a homogeneous polynomial of degree $s$ in $f_1, \dots, f_{s^2}$ with 
coefficients in $K$, we conclude $N_D(f) \in \Rad_K(J_c)$. By the definition of $\Rad_D(J)$, this is equivalent to
$f \in \Rad_D(J)$, as desired. \qed

\begin{example} The quaternionic Nullstellensatz~\eqref{e.H-null} is readily recovered from Theorem~\ref{thm.main}(a).
Indeed, let $K = \mathbb R$ be the field of real numbers and $D = \mathbb H$ be the algebra of Hamilton quaternions. 
Following Along and Paran~\cite{alon-paran}, we define the quaternionic radical of an ideal $J \subset P_{\bbH , n}$ as
\begin{align*} 
\Rad_{AP}(J) = &  \{ f \in P_{\bbH , n} \; | \;  \exists \; m, k \geqslant 1,  \; \text{and} \;
  f_1, \ldots, f_{m-1} \in P_{\bbH , n} \; \text{such that} \\ 
& N_{\mathbb H}(f_1) + \ldots + N_{\mathbb H}(f_{m-1}) + N_{\mathbb H}(f)^k \in J \}. \end{align*}

To recover the quaternionic Nullstellensatz~\eqref{e.H-null} from Theorem~\ref{thm.main}, it suffices to show that 
\[ \Rad_{\mathbb H}(J) \subset \Rad_{AP}(J) \subset \mathcal{I}( \mathcal{Z}(J) ). \]
To prove that $\Rad_{AP}(J) \subset  \mathcal{I}(\mathcal{Z}(J))$, suppose that $f \in \Rad_{AP}(J)$. 
Then there exists $m,k \geqslant 1$, and $f_1, \dots, f_{m-1} \in P_{\bbH , n}$, such 
that $N_{\bbH} (f_1) + \dots + N_{\bbH}(f_{m-1}) + N_{\bbH} (f)^k \in J$. 
So for any $a \in \mathcal{Z}(J)$, we have $(N_{\bbH} (f_1) + \dots + N_{\bbH}(f_{m-1}) + N_{\bbH} (f)^k) (a)=0$.
Since $N_{\bbH}(b)$ is non-negative for any $b \in \mathbb H$, this implies $f(a)=0$. In other words,
$f$ vanishes at every point of $\mathcal{Z}(J)$, i.e., $f \in \mathcal{I}(\mathcal{Z}(J))$, as desired.

To prove that $\Rad_{\mathbb H}(J) \subset \Rad_{AP}(J)$, suppose that $f \in \Rad_{\bbH} (J) $. 
Then by our definition of $\Rad_{\mathbb H}(J)$, we have $N_{\mathbb H}(f) \in \Rad_{\mathbb R}(J_c)$. 
By the Real Nullstellensatz~\eqref{e.real-null}, 
there exists $f_1, \dots, f_{m-1} \in C_{D, n}$ and $m \geqslant 1 $, such that 
$f_1^2 + \dots + f_{m-1}^2 +N_{\bbH}(f)^{2k} \in J_c$ for some $k \geq 1$. 
Since $f_i \in C_{D, n}$, $f_i^2=N_{\bbH}(f_i)$. In other words,
$N_{\mathbb H}(f_1) + \ldots + N_{\mathbb H}(f_{m-1}) + N_{\mathbb H}(f)^{2k} \in J$. Thus 
$f \in \Rad_{AP}(J)$, as desired.
\qed
\end{example}

\section{Proof of Theorem~\ref{thm.main}(b)}

It is clear from the definition that $\Rad'_D(J) \subset \Rad_D(J)$. Our proof of the opposite inclusion will rely on the following elementary proposition.

\begin{prop} \label{prop.polynomial} 
Let $K$ be a field and $s \geqslant 1$ be an integer. If $\operatorname{char}(K)$ does not divide $s!$, 
then the $s$-powers span the polynomial ring $K[y_1, \ldots, y_m]$ as a $K$-vector space. In other words, any polynomial 
$f \in K[y_1, \dots, y_m]$, can be written as a $K$-linear combination of
polynomials of the form $g^s$, where $g \in K[y_1, \ldots, y_m]$.
\end{prop}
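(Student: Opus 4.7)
My plan is to reduce the statement to a single algebraic identity: the classical \emph{polarization identity} for $s$-th powers, which in characteristic not dividing $s!$ allows one to recover products as $K$-linear combinations of $s$-th powers.

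Concretely, the first step is to record the identity
\[
s!\, h_1 h_2 \cdots h_s \;=\; \sum_{\varnothing \neq S \subseteq \{1,\ldots,s\}} (-1)^{s-|S|} \Bigl(\sum_{j \in S} h_j\Bigr)^{\!s},
\]
valid for any elements $h_1,\ldots,h_s$ of any commutative $K$-algebra. This is proved by expanding each $(\sum_{j \in S} h_j)^s$ by the multinomial theorem and noting that the coefficient of a multinomial monomial $h_1^{a_1}\cdots h_s^{a_s}$ on the right telescopes to $s!$ when all $a_i = 1$ and to $0$ otherwise (an elementary inclusion-exclusion). Since $\operatorname{char}(K) \nmid s!$, the factor $s!$ is invertible in $K$, so the identity expresses the product $h_1 \cdots h_s$ as an explicit $K$-linear combination of $s$-th powers of elements of the subalgebra generated by the $h_i$'s.

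The second step is to apply this to $K[y_1,\ldots,y_m]$ as follows. Since monomials span $K[y_1,\ldots,y_m]$ as a $K$-vector space, it suffices to write each monomial $M = y_1^{a_1}\cdots y_m^{a_m}$ as a $K$-linear combination of $s$-th powers. If $\deg M \geqslant s$, group the variables appearing in $M$ into $s$ nonempty blocks to write $M = h_1 h_2 \cdots h_s$ with each $h_i$ a monomial; then the polarization identity expresses $M$ as a $K$-linear combination of $s$-th powers of polynomials. If $\deg M < s$ (including the case $M = 1$), take $h_1 = M$ and $h_2 = \cdots = h_s = 1$, and apply the same identity. This handles every monomial, and hence every polynomial, completing the proof.

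I do not expect a serious obstacle here: the only thing to verify carefully is the polarization identity itself, and the role of the hypothesis $\operatorname{char}(K) \nmid s!$ is transparent (it is exactly what is needed to divide by $s!$). The result is essentially a restatement of the well-known fact that, away from ``bad'' characteristics, symmetric multilinear forms can be recovered from their associated homogeneous polynomials of degree $s$.
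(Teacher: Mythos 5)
Your proof is correct and follows essentially the same route as the paper: both rest on the polarization identity expressing $s!\,t_1\cdots t_s$ as a signed sum of $s$-th powers $\bigl(\sum_{i\in I}t_i\bigr)^s$ over subsets $I$, with the hypothesis $\operatorname{char}(K)\nmid s!$ used only to divide by $s!$. The paper applies this identity directly with $t_1=f$ and $t_2=\cdots=t_s=1$ for an arbitrary polynomial $f$, so your preliminary reduction to monomials and the case split on $\deg M$ are superfluous, but the underlying argument is the same.
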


\begin{proof} 

We claim that the following identity holds in any commutative ring $R$:
\begin{equation} \label{e.polarization}
t_1 \ldots t_s = \frac{1}{s!} \sum_{I \subset \{  1, \ldots, s \}} (-1)^{s - |I|} \, (\sum_{i \in I} t_i)^s \, . \end{equation}
Here $t_i \in R$ 
and $|I|$ denotes the number of elements in a subset $I$ of $\{ 1, \dots s \}$. Once~\eqref{e.polarization} is established, 
we can substitute $R = K[y_1, \ldots, y_m]$, $t_1 = f$ and $t_2 = \ldots = t_s = 1$ to obtain a desired expression for $f$ 
as a linear combination of $s$-powers in $K[y_1, \ldots, y_m]$.

It thus remains to prove~\eqref{e.polarization}. This identity is a special case of the ``polarization formula", 
relating a homogeneous polynomial of degree $s$ to its associated $s$-linear form; see~\cite{se}. 
For the sake of completeness, we include a short self-contained proof 
of~\eqref{e.polarization}~\footnote{A slightly modified version of this same argument can be used to prove the general polarization formula.}.

The right hand side of~\eqref{e.polarization} is a homogeneous polynomial of degree $s$. To determine the coefficient of 
a monomial $M$ in $t_1, \ldots, t_s$ of degree $s$, let us consider two cases.

\smallskip
Case 1: One of the variables $t_1, \ldots, t_s$ does not occur in $M$.
For notational simplicity, let us say this variable is $t_s$.
To calculate the coefficient of $M$ on the right hand side of~\eqref{e.polarization}, 
we may set $t_s$ equal to $0$. 
The index sets $I \subset \{ 1, \ldots, s \}$ not containing $s$ are in bijective correspondence with index sets $I' = I \cup \{ s \}$ 
containing $s$. Since the size of $I'$ is one greater that the size of $I$ the terms $(\sum_{i \in I} t_i)^s$ and $(\sum_{i \in I' } t_i)^s$
appear with opposite signs in the outer sum. When we substitute $t_s = 0$, these terms will cancel, and the right hand side will sum up to $0$ .
This shows that the coefficient of $M$ on the right hand side of~\eqref{e.polarization} is $0$.

\smallskip
Case 2. The only monomial $M$ of degree $s$ not covered by Case 1 is $M = t_1 \dots t_s$. This monomial can only come 
from the $(t_1+\dots +t_s)^s$ term on the right hand side of~\eqref{e.polarization}, and it comes with coefficient $s!$. 

\smallskip
This completes the proof of~\eqref{e.polarization} and thus of Proposition~\ref{prop.polynomial}.
\end{proof}

We are now ready to finish the proof of Theorem~\ref{thm.main}(b) by showing that \[ \Rad_D(J) \subset \Rad'_D(J). \]
Let $f \in \Rad_D(J)$. Our goal is to show that $f \in \Rad'_D(J)$. By the definition of $\Rad_D(J)$, 
there exists a homogeneous quasi-anisotropic $p \in K[z_1, \dots, z_m]$ 
and $f_1, \dots, f_{m-1} \in C_{D,n}$, such that $p(f_1, \dots, f_{m-1}, N_D(f)) \in J$.

As we explained in the Introduction, after choosing a $K$-basis $b_1, \ldots, b_{s^2}$ in $D$,
we can identify $C_{D, n}$ with the polynomial ring $K[y_{ij}]$
in $n s^2$ variables.
By Proposition~\ref{prop.polynomial}, there exist finitely many polynomials
$g_1, \dots, g_l \in C_{D,n}$, such that each $f_i$ can be written as 
$$f_i=c_{i1} g_1^s + \dots + c_{il} g_l^s$$
for some $c_{ij} \in K$. (The proof of Proposition~\ref{prop.polynomial} shows that
we can take each $c_{ij}$ to be either $0$ or $\pm \dfrac{1}{s!}$, but this will
not matter in the sequel.)
Let $$q(w_1, \dots, w_l, w_{l+1})= p(z_1, \dots, z_{m-1}, y_m), $$ where 
$ z_i=c_{i1}w_1 + \dots + c_{il} w_l$ for each  $i=1, \dots, m-1$
and $z_m= w_{l+1}$. 
This way we obtain a new homogeneous polynomial $q(w_1, \dots, w_l, w_{l+1})$ in $l+1$ variables. 
Since $p$ is quasi-anisotropic, it is clear from the definition that $q$ is also quasi-anisotropic and 
$$q(g_1^s, \dots, g_l^s,N_D(f))=p(f_1, \dots, f_{m-1},N_D(f)) \in J.$$
Since $g_i \in C_{D,n}$, we have $N_D(g_i) = g_i^s$ and 
thus \[ q(N_D(g_1), \dots, N_D(g_l), N_D(f)) = q(g_1^s, \dots, g_l^s, N_D(f)) \in J. \] 
This shows that $f \in \Rad'_D(J)$, as desired.
\qed

\section{$D$-radical ideals}
\label{sect.D-radical}

An ideal $J$ in $P_{K, n} = K[x_1, \ldots, x_n]$ is called $K$-radical if $J = \mathcal{Z}(\mathcal{I}(J))$. The following description of $K$-radical ideals is due to Adkins, Gianni and Tognoli~\cite{agt}: $J$ is $K$-radical if and only $\Rad_K(J) = J$. This assertion is 
a precursor to (and, in turn, an easy consequence of) the $K$-Nullstellensatz~\eqref{e.K-null}, due to Laksov. 

Now let $D$ be a finite-dimensional division algebra over $K$. We will say that a two-sided ideal $J \subset P_{D, n}$ is a $D$-radical if
$J = \mathcal{I}(\mathcal{Z}(J))$. In view of Theorem~\ref{thm.main} this is equivalent to $J = \Rad_D(J)$. 
We conclude this paper with the following observation.

\begin{prop} \label{prop.D-radical} A two-sided ideal $J \subset P_{D, n}$ is $D$-radical if and only if $J_c = J \cap C_{D, n}$ is $K$-radical. 
\end{prop}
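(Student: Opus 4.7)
The plan is to translate both conditions into radical identities via the already-available Nullstellens\"atze. By Theorem~\ref{thm.main}(a), $J$ is $D$-radical if and only if $J = \Rad_D(J)$, and by the $K$-Nullstellensatz~\eqref{e.K-null}, $J_c$ is $K$-radical if and only if $J_c = \Rad_K(J_c)$. So the proposition reduces to proving the equivalence $J = \Rad_D(J) \Leftrightarrow J_c = \Rad_K(J_c)$, and both directions should drop out after unpacking the definition of $\Rad_D(J)$.

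For the forward direction, I would start with an element $g \in \Rad_K(J_c)$ and aim to show $g \in J_c$. The key observation is that $g$ is central (it lies in $C_{D, n}$), so its reduced norm is simply $N_D(g) = g^s$. Since $\Rad_K(J_c)$ is an ideal of $C_{D, n}$, $g^s$ again lies in $\Rad_K(J_c)$, and hence by definition $g \in \Rad_D(J) = J$; combined with $g \in C_{D, n}$, this places $g$ in $J_c$. The reverse containment $J_c \subset \Rad_K(J_c)$ is automatic.

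For the reverse direction, I would pick $f \in \Rad_D(J) = \mathcal{I}(\mathcal{Z}(J))$ and reuse the decomposition trick from the proof of Theorem~\ref{thm.main}(a): expanding $f = f_1 b_1 + \ldots + f_{s^2} b_{s^2}$ against a $K$-basis of $D$, each $f_i \in C_{D, n}$ vanishes on $\mathcal{Z}(J_c)$ (by equation~\eqref{e.z}), so $f_i \in \mathcal{I}(\mathcal{Z}(J_c)) = \Rad_K(J_c) = J_c \subset J$, where the middle equality is the $K$-Nullstellensatz and the last one is the hypothesis. Since $J$ is a two-sided ideal of $P_{D, n}$ and each $b_i$ sits inside $P_{D, n}$ as a constant polynomial map, $f_i b_i \in J$, and therefore $f \in J$.

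I do not foresee any serious obstacle: all the substance has been packaged into Theorem~\ref{thm.main}(a), the $K$-Nullstellensatz, and the centrality identity $N_D(g) = g^s$ for $g \in C_{D, n}$. The one point worth checking carefully is the closure of $\Rad_K(J_c)$ under powers used in the forward direction, but this is immediate once the $K$-Nullstellensatz identifies $\Rad_K(J_c)$ with $\mathcal{I}(\mathcal{Z}(J_c))$, which is manifestly an ideal of $C_{D, n}$.
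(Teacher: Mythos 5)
Your proposal is correct, and the reverse direction (from $J_c$ $K$-radical to $J$ $D$-radical) is essentially identical to the paper's argument: decompose $f = f_1 b_1 + \ldots + f_{s^2} b_{s^2}$, use $\mathcal{Z}(J) = \mathcal{Z}(J_c)$ to place each $f_i$ in $\mathcal{I}(\mathcal{Z}(J_c)) = J_c$, and reassemble. The forward direction takes a slightly longer route than the paper's. The paper argues directly with zero sets: given $g \in C_{D,n}$ with $g \in \mathcal{I}(\mathcal{Z}(J_c)) = \mathcal{I}(\mathcal{Z}(J))$, the $D$-radical hypothesis immediately gives $g \in J$, hence $g \in J \cap C_{D,n} = J_c$. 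There is no need for the norm identity $N_D(g) = g^s$, the definition of $\Rad_D(J)$, or the observation that $\Rad_K(J_c)$ is an ideal. Your detour through these facts is sound (and does illustrate nicely why centrality collapses $N_D$ to the $s$-th power map), but it is more machinery than the claim requires. In the end both approaches hinge on equation~\eqref{e.z} together with Theorem~\ref{thm.main}(a) and the $K$-Nullstellensatz, so this is a mild repackaging rather than a genuinely different argument.
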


\begin{proof} Recall that $\mathcal{Z}(J) = \mathcal{Z}(J_c)$ for any $2$-sided ideal $J \subset P_{D, n}$; see~\eqref{e.z}.

First suppose $J \subset P_{D, n}$ is $D$-radical. Clearly $J_c \subset \mathcal{I}(\mathcal{Z}(J_c))$. To prove the opposite inclusion,
assume that $f \in C_{D, n}$ lies in $\mathcal{I}(\mathcal{Z}(J_c))$. Since $J$ is $D$-radical, we conclude that $f$ lies in $J$ and hence, 
in $J \cap C_{D, n} = J_c$, as desired.

Conversely, suppose $J_c$ is $K$-radical. Once again, $J \subset \mathcal{I}(\mathcal{Z}(J))$ for any two-sided ideal $J \subset P_{D, n}$, so we only need to prove the opposite inclusion. Assume that $f \in \mathcal{I}(\mathcal{Z}(J)) \subset P_{D, n}$. 
Choose a $K$-basis $b_1, \ldots, b_{s^2}$ of $D$ and write $f = f_1 b_1 + \ldots + f_{s^2} b_{s^2}$, where each $f_i$ lies in $C_{D, n}$. 
Since $f$ vanishes on $\mathcal{Z}(J) = \mathcal{Z}(J_c)$, so does each $f_i$. Since $J_c$ is $K$-radical, 
we conclude that each $f_i$ lies in $J_c$ and hence, $f = f_1 b_1 + \ldots + f_{s^2} b_{s^2}$ lies in $J$. 
\end{proof}

\end{document}